\documentclass{article}

\usepackage{amsmath, amssymb, amsthm}
\usepackage{cite}
\usepackage{authblk}

\newtheorem{theorem}{Theorem}

\newtheorem{corl}[theorem]{Corollary}

\newtheorem*{theorem*}{Theorem}
\newtheorem*{corl*}{Corollary}

\numberwithin{theorem}{section}
\numberwithin{equation}{section}

\newcommand{\fav}{\operatorname{Fav}}
\newcommand{\kn}{\mathcal{K}_n}
\newcommand{\dist}{\operatorname{dist}}
\newcommand{\dimh}{\operatorname{dim}_{\mathcal{H}}}

\title{Geometric Bounds for Favard Length}
\author{Tyler Bongers %
	\thanks{Email: \texttt{charlesb@math.msu.edu} \\
	2010 \emph{Mathematics Subject Classification}: 28A78, 28A80}
}

\affil{Department of Mathematics, Michigan State University}

\date{\today}

\begin{document}
\maketitle
\begin{abstract}
Given a set in the plane, the average length of its projections over all directions is called Favard length. This quantity measures the size of a set, and is closely related to metric and geometric properties of the set such as rectifiability, Hausdorff dimension, and analytic capacity. In this paper, we develop new geometric techniques for estimating Favard length. We will give a short geometrically motivated proof relating Hausdorff dimension to the decay rate of the Favard length of neighborhoods of a set. We will also show that the sequence of Favard lengths of the generations of a self-similar set is convex; this has direct applications to giving lower bounds on Favard length for various fractal sets.
\end{abstract}

\section{Introduction}
Given a set $E$ in the plane, its Favard length is the average
$$\operatorname{Fav}(E) = \int_0^{2\pi} |\pi_{\theta} E|\, d\theta$$
where $\pi_{\theta}$ is orthogonal projection onto a line $L_{\theta}$ through the origin at angle $\theta$ to the positive $x$-axis, and $|.|$ is the length measure within the line $L_{\theta}$. This quantity is comparable to the Buffon needle probability of the set $E$; this is the probability that a needle dropped near the set $E$ passes through it. 

The Favard length of a set carries a great deal of metric and geometric information about the set. It is deeply related to rectifiability; Bescovitch proved in \cite{Bes39} that a set with positive and finite length is purely unrectifiable if and only if it has Favard length zero. In such a case, we know that the Favard lengths of the $r$-neighborhoods of $E$ (that is, the set $E(r)$ of points of distance no more than $r$ from $E$) must tend to zero as $r$ does. The exact rate of decay is another measure of the size of a set, and is related to Minkowski dimension. It is also conjectured that Favard length is controlled by analytic capacity in many circumstances.

In this paper, we will give geometrically motivated proofs for various properties of Favard length. First, we will reprove a result of Mattila from \cite{Mat90} that connects the decay rate of the Favard lengths of the neighborhoods of a set with the Hausdorff dimension of the underlying set:
\begin{theorem*}
Fix $s \in (0, 1)$ and suppose that $E \subseteq \mathbb{R}^2$ is measurable, and $A \subseteq S^1$ is measurable with positive (arc-length) measure. Suppose there exists a sequence of scales $r_n \to 0$ such that 
$$\int_A \left|\pi_{\theta} \left(E(r_n)\right) \right| \, d\theta \le Cr_n^{s}$$
for some $C < \infty$. Then $\dimh E \le 1 - s$.
\end{theorem*}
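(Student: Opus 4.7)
The plan is to argue by contradiction using Frostman's lemma. Suppose $\dimh E > 1-s$ and pick $t$ with $1-s < t < \min(1,\,\dimh E)$; this is possible since $s>0$. Frostman's lemma then produces a probability measure $\mu$ supported on $E$ satisfying $\mu(B(x,\rho))\le C\rho^t$ for every $x\in\mathbb{R}^2$ and $\rho>0$. The strategy is to use $\mu$ to force a lower bound of order $r_n^{1-t}$ on $\int_A |\pi_\theta(E(r_n))|\,d\theta$, which contradicts the hypothesis because $1-t<s$.

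For the execution, fix $r>0$ and for each $\theta$ introduce the averaged strip-density
$$f_\theta(y) = \frac{1}{2r}\,\mu\bigl(\{x:|\pi_\theta x - y|\le r\}\bigr)$$
on $L_\theta$. This function is nonnegative, has integral $1$, and is supported on $\pi_\theta(E(r))$. Cauchy-Schwarz gives $1 \le \|f_\theta\|_2\,|\pi_\theta(E(r))|^{1/2}$; integrating over $A$ and applying Cauchy-Schwarz once more yields
$$|A|^2 \le \left(\int_A |\pi_\theta(E(r))|\,d\theta\right)\left(\int_A \|f_\theta\|_2^2\,d\theta\right).$$
Expanding $\|f_\theta\|_2^2$ as a double integral against $\mu\times\mu$, swapping the order of integration via Fubini, and using the elementary transversality estimate $|\{\theta\in A:|\pi_\theta v|\le 2r\}|\le C\min(1,\, r/|v|)$, one reduces the second factor (up to constants) to
$$\frac{1}{r}\iint \min\bigl(1,\,r/|x-x'|\bigr)\,d\mu(x)\,d\mu(x').$$
A dyadic layer-cake decomposition combined with the Frostman bound shows this double integral is at most $Cr^t$, so $\int_A\|f_\theta\|_2^2\,d\theta\le Cr^{t-1}$. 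Combining with the hypothesis $\int_A|\pi_\theta(E(r_n))|\,d\theta\le Cr_n^s$ gives $|A|^2\le C r_n^{\,s+t-1}$, which tends to $0$ as $r_n\to 0$ since $s+t>1$, contradicting $|A|>0$.

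The main obstacle is the energy estimate $\int_A\|f_\theta\|_2^2\,d\theta\le Cr^{t-1}$. Although $\mu$ is Frostman with respect to balls, the quantity $\|f_\theta\|_2^2$ is controlled by the $\mu$-mass of strips of width $2r$, which can vastly exceed that of balls of radius $r$ when $E$ contains a segment aligned with $e_\theta$. The strip-to-ball conversion only emerges after averaging in $\theta$, where the transversality bound $|\{\theta:|\pi_\theta v|\le 2r\}|\le Cr/|v|$ for $|v|\ge 2r$ does the essential work; this is the geometric content of the argument. The requirement $t<1$ is needed for convergence at the right scale of the resulting Riesz-type integral, and is harmless since the contradiction hypothesis $\dimh E > 1-s$ with $s>0$ always leaves room to choose $t\in(1-s,1)$.
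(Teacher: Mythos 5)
Your argument is correct in substance, but it is essentially the classical potential-theoretic proof --- the one this paper explicitly sets out to replace. You build a Frostman measure $\mu$ of exponent $t \in (1-s,1)$ on $E$, form the strip densities $f_\theta$, and run the standard $L^2$/transversality computation $\int_A \|f_\theta\|_2^2\,d\theta \lesssim r^{-1}\iint \min(1, r/|x-x'|)\,d\mu(x)\,d\mu(x') \lesssim r^{t-1}$; combined with two applications of Cauchy--Schwarz this forces $|A|^2 \lesssim r_n^{s+t-1} \to 0$, a contradiction. Each step checks out (the dyadic layer-cake does need $t<1$, as you note, and the choice $t\in(1-s,1)$ is always available). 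The paper instead avoids measures entirely: it first shows $\dimh E \le 1$, invokes Marstrand's projection theorem to find a single direction $\varphi$ with $\dimh \pi_\varphi E = \dimh E$ and $|\pi_\varphi E(r_n)| \le r_n^{s-\epsilon}$ along a subsequence, then uses the component intervals of $\pi_\varphi E(r_n)$ (each of length at least $2r_n$, hence at most $r_n^{s-\epsilon-1}$ of them) as a cover, applying a reverse H\"older inequality to bound $\sum_k |I_{n,k}|^p$ for $p \ge 1-s+\epsilon$ and conclude $\mathcal{H}^{1-s+\epsilon}(\pi_\varphi E) < \infty$ directly. What your route buys is the familiar, quantitative energy framework; what it costs is exactly what the paper is trying to avoid: Frostman's lemma requires $E$ to be Borel, analytic, or compact, whereas the theorem is stated for merely measurable $E$, and the paper's covering argument needs no such topological hypothesis. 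So your proposal is a valid proof under a mild regularity assumption on $E$, but it is in essence the approach of Mattila's original paper rather than the geometric one developed here.
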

The original proof of this theorem relies on potentials and estimates of the energy of a measure; we will prove this result here with a direct geometric argument. 

In the next section, we will show how self-similarity leads to new and useful properties of the sequence of Favard lengths. In particular, we will show that:
\begin{theorem*}
Suppose that $\{A_n\}_{n \in \mathbb{N}}$ is a sequence of sets such that $A_{n + 1} \subseteq A_{n}$ for all $n$, that each generation can be written as a union
$$A_{n + 1} = \bigcup_{i = 1}^N r_{i} A_n + \beta_{i}$$
for some fixed set of contraction ratios $r_i > 0$, and that $\sum_{i} r_{i} = 1$. Then for each $\theta$, the sequence $\{|\pi_{\theta} A_n|\}_{n \in \mathbb{N}}$ is convex.
\end{theorem*}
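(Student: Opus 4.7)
The plan is to work directly on the line $L_\theta$. Writing $B_n := \pi_\theta A_n$ and $c_i := \pi_\theta \beta_i$, the hypotheses descend to the projections: $B_{n+1} = \bigcup_{i=1}^N (r_i B_n + c_i)$, and the nesting $A_{n+1} \subseteq A_n$ gives $B_{n+1} \subseteq B_n$. Convexity of the sequence $\{|B_n|\}$ is the inequality
$$|B_n| - |B_{n+1}| \geq |B_{n+1}| - |B_{n+2}|,$$
so the goal is to represent the consecutive differences as a quantity that is manifestly monotone in $n$.

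The natural candidate is the total overlap incurred when one assembles $B_{n+1}$ from its $N$ scaled and translated copies of $B_n$. First, I would introduce the multiplicity function
$$g_n(x) := \sum_{i=1}^N \chi_{r_i B_n + c_i}(x),$$
which is non-negative and integer-valued, with $\{g_n \geq 1\} = B_{n+1}$. Because $\sum_i r_i = 1$, I get $\int g_n = \sum_i r_i |B_n| = |B_n|$; subtracting $|B_{n+1}| = \int \chi_{\{g_n \geq 1\}}$ and using integrality of $g_n$ gives the key identity
$$|B_n| - |B_{n+1}| = \int (g_n - 1)_+.$$

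Second, I would exploit the nesting $B_{n+1} \subseteq B_n$: it gives $r_i B_{n+1} + c_i \subseteq r_i B_n + c_i$ for every $i$, so the multiplicities satisfy $g_{n+1} \leq g_n$ pointwise. Since $t \mapsto (t-1)_+$ is non-decreasing on $[0,\infty)$, integration yields
$$|B_{n+1}| - |B_{n+2}| = \int (g_{n+1} - 1)_+ \leq \int (g_n - 1)_+ = |B_n| - |B_{n+1}|,$$
which is the desired convexity. The main conceptual step is the first one: recognizing that the decrement in projection length is exactly the integral of $(g_n - 1)_+$. Once this identification is in place, the inequality is immediate from pointwise monotonicity of the multiplicity function, which in turn comes for free from the nesting hypothesis $A_{n+1} \subseteq A_n$; no inclusion-exclusion or structure on the overlaps themselves is required.
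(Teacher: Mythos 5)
Your proof is correct, and it takes a genuinely different route from the paper's. The paper argues entirely with set differences: writing $T_i(x) = r_i x + \pi_\theta\beta_i$, it bounds $|E_{n,\theta}\setminus E_{n+1,\theta}| = \bigl|\bigcup_i T_i(E_{n-1,\theta}) \setminus \bigcup_j T_j(E_{n,\theta})\bigr|$ by the measure of $\bigcup_i T_i(E_{n-1,\theta}\setminus E_{n,\theta})$, then uses subadditivity, the scaling factor $r_i$, and $\sum_i r_i = 1$ to close the chain. You instead introduce the multiplicity function $g_n = \sum_i \chi_{r_iB_n+c_i}$ and prove the exact identity $|B_n|-|B_{n+1}| = \int (g_n-1)_+$, after which convexity is immediate from the pointwise monotonicity $g_{n+1}\le g_n$ forced by nesting. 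Both arguments use exactly the same two hypotheses in the same roles (nesting gives monotonicity, $\sum_i r_i = 1$ gives the correct normalization $\int g_n = |B_n|$), but yours replaces the paper's one-sided containment with an exact formula: the decrement of projection length at each stage \emph{is} the total excess overlap among the $N$ rescaled copies. That identity is strictly more information than the paper extracts --- it quantifies the decrement in terms of the multiplicity distribution, which is the kind of counting-function data that finer analyses of Buffon needle problems exploit --- so your version could be reused if one wanted to go beyond convexity. One small housekeeping point shared with the paper: the identity $\int(g_n-1)_+ = |B_n|-|B_{n+1}|$ requires $|B_n|<\infty$ so that the subtraction is legitimate, which holds here since the sets are nested inside $B_0$, and one should note (as the paper also implicitly assumes) that the projections are measurable with finite measure.
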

Note that since the sum of contraction ratios is $1$, the sequence of sets converges to an attractor which is a self-similar set of Hausdorff dimension at most $1$ (and if the similitudes satisfy the open set condition, it has Hausdorff dimension equal to $1$). See, e.g., Chapter 4 of \cite{Mat95} for more details.

Convexity gives a powerful constraint on the decay rate of Favard lengths: the decay within the first few generations controls the decay until much later stages. In particular, it is very easy to recover the result that:
\begin{corl*}
If $\kn$ is the $n$-th generation of the four-corner Cantor set, then $\fav(\kn) \gtrsim 1/n$. 
\end{corl*}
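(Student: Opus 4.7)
My plan is to invoke the convexity theorem from the previous section applied to the four-corner Cantor set, and then combine the resulting convex structure with standard boundary data.

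First I verify the hypotheses. The four-corner Cantor set is the attractor of the iterated function system consisting of $N=4$ similarities with contraction ratios $r_i = 1/4$, one placing a copy of $[0,1]^2$ at each of its four corners, so $\sum_{i=1}^4 r_i = 1$. The theorem then gives that $\{|\pi_\theta \kn|\}_n$ is convex in $n$ for every $\theta$. Since integration over $\theta$ preserves the pointwise inequality $|\pi_\theta K_{n-1}| + |\pi_\theta K_{n+1}| \ge 2\,|\pi_\theta \kn|$, the Favard-length sequence $a_n := \fav(\kn)$ is also a convex sequence in $n$.

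Next I gather boundary information. The value $a_0 = \fav([0,1]^2) = \int_0^{2\pi}(|\cos\theta|+|\sin\theta|)\,d\theta = 8$ is an explicit positive constant; each $a_n$ is strictly positive since $\kn$ has positive planar measure; the sequence is non-increasing since $K_{n+1} \subseteq \kn$; and $a_n \to 0$ because the limiting Cantor set $K$ has positive and finite one-dimensional Hausdorff measure but is purely unrectifiable, so Besicovitch's theorem gives $\fav(K) = 0$, and dominated convergence (using the domination $|\pi_\theta \kn| \le |\pi_\theta K_0|$) transfers this conclusion through the integral in $\theta$.

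The final step extracts the $1/n$ lower bound from convexity together with these constraints. By convexity, the decrements $c_n := a_n - a_{n+1} \ge 0$ are non-increasing with $\sum_{n \ge 0} c_n = a_0$. Monotonicity combined with the telescoping identity $\sum_{k=0}^{n-1} c_k = a_0 - a_n$ yields $n\,c_{n-1} \le a_0 - a_n$, and hence the single-step estimate $(n+1)\,a_n \le n\,a_{n+1} + a_0$. I expect the main obstacle to be in assembling this estimate, together with the strict positivity of every $a_n$ and the known convergence $a_n \to 0$, into the quantitative lower bound $a_n \gtrsim 1/n$. The underlying idea is that a convex, strictly positive sequence decaying to zero from a fixed positive value $a_0$ cannot exhaust its remaining ``mass'' $a_n = \sum_{k \ge n} c_k$ faster than the non-increasing structure of the $c_k$'s permits; making this comparison quantitative --- essentially by comparing the convex profile of $\{a_n\}$ to the extremal one saturating the decrement inequality --- will produce the desired rate.
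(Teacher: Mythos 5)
There is a genuine gap, and it is exactly where you suspected: the final assembly step. A convex, strictly positive, non-increasing sequence with $a_0$ fixed and $a_n \to 0$ need \emph{not} satisfy $a_n \gtrsim 1/n$. The sequence $a_n = 2^{-n}$ is convex (since $2^{-(n-1)} + 2^{-(n+1)} = \tfrac{5}{2}\cdot 2^{-n} \ge 2\cdot 2^{-n}$), positive, decreasing, and tends to zero, yet decays exponentially. Your derived inequality $(n+1)a_n \le n\,a_{n+1} + a_0$ runs in the wrong direction: rearranged it reads $a_{n+1} \ge \frac{(n+1)a_n - a_0}{n}$, which is vacuous as soon as $a_n \le a_0/(n+1)$, so it can never bootstrap a lower bound. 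What convexity actually gives is $a_n \ge a_0 - n(a_0 - a_1)$, which is useful only while $n \lesssim a_0/(a_0 - a_1)$; since $\fav(\mathcal{K}_0) - \fav(\mathcal{K}_1)$ is a fixed positive constant, integrating over all of $[0,2\pi)$ before invoking convexity destroys the estimate. In short, convexity plus the boundary data you list is insufficient; some additional input about the \emph{first decrement} being small is required.

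The paper supplies exactly that missing input by staying pointwise in $\theta$. At the special angle $\theta^* = \arctan(1/2)$ the projection maps the four children of $\mathcal{K}_1$ onto essentially non-overlapping intervals, so the first decrement $\alpha_0(\theta^*) - \alpha_1(\theta^*)$ is \emph{zero} there. Since $\theta \mapsto \alpha_0(\theta) - \alpha_1(\theta)$ is piecewise $C^1$ with bounded derivative, for each $n$ there is an interval $I_n$ of angles of length $\sim 1/n$ centered at $\theta^*$ on which the first decrement is at most $1/(2n)$; convexity (decrements non-increasing in the generation) then keeps $\alpha_n(\theta) \ge \alpha_0(\theta) - n \cdot \frac{1}{2n} \ge \frac12$ on $I_n$, and integrating over $I_n$ alone yields $\fav(\mathcal{K}_n) \ge \frac{1}{40n}$. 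So the correct use of the convexity theorem is pointwise in $\theta$ near an angle where the projection is injective on components, with the angular window shrinking like $1/n$ --- not convexity of the integrated sequence. I would rework your final step along these lines.
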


Before we begin the proofs, we first define some notation. Given a set $A$, we will denote its Lebesgue measure by $|A|$; depending on the context, this could mean the Lebesgue measure within a line, or area measure in the plane, or arc-length measure in the circle. If it is clear from context which one of these is meant, we will not specify.

\section{Dimension and Favard Length}
In this section, we will prove that sufficiently quick decay of Favard length of neighborhoods of a set controls the Hausdorff dimension of the set. For a set $E$ in some Euclidean space and $r > 0$, we denote the $r$-neighborhood of $E$ by
$$E(r) = \{x : \dist(x, E) < r\}.$$
The decay rate of the Lebesgue measure of $E(r)$ as $r \to 0$ is connected with the Minkowski dimension of the underlying set, as well as other notions of size. Our main result is a new proof of the following theorem:
\begin{theorem}
Fix $s \in (0, 1)$ and suppose that $E \subseteq \mathbb{R}^2$ is measurable, and $A \subseteq S^1$ is measurable with positive (arc-length) measure. Suppose there exists a sequence of scales $r_n \to 0$ such that 
$$\int_A \left|\pi_{\theta} \left(E(r_n)\right) \right| \, d\theta \le Cr_n^{s}$$
for some $C < \infty$. Then $\dimh E \le 1 - s$.
\end{theorem}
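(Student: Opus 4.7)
The plan is to reduce to Frostman measures and apply Cauchy--Schwarz twice. After noting that we may assume $E$ is bounded (since Hausdorff dimension is preserved under countable unions and the projection hypothesis descends to any subset of $E$), suppose for contradiction that $\dimh E > 1 - s$. Fix $d$ with $1 - s < d < \min(1, \dimh E)$; by Frostman's lemma there is a Borel probability measure $\mu$ supported on $E$ satisfying $\mu(B(x, \rho)) \le \rho^d$ for every $x$ and $\rho > 0$. The goal is then to prove the reverse inequality $\int_A |\pi_\theta(E(r_n))|\,d\theta \gtrsim r_n^{1-d}$, which contradicts the hypothesis for $r_n$ small since $1 - d < s$.

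For each $\theta$ and each $t \in L_\theta$, set $M_\theta(t) = \mu\{x : |\pi_\theta(x) - t| < r_n\}$, the $\mu$-mass of the strip of width $2r_n$ perpendicular to $L_\theta$ through $t$. Fubini gives $\int M_\theta(t)\,dt = 2 r_n$, and $M_\theta$ is supported in $\pi_\theta(E(r_n))$ (since $x \in E$ implies $B(x, r_n) \subseteq E(r_n)$). Cauchy--Schwarz on the support of $M_\theta$ yields $(2r_n)^2 \le |\pi_\theta(E(r_n))|\cdot \int M_\theta(t)^2\,dt$, and a second Cauchy--Schwarz in $\theta$ then gives
$$\int_A |\pi_\theta(E(r_n))|\,d\theta \cdot \int_A \int M_\theta(t)^2\,dt\,d\theta \ge 4 r_n^2 |A|^2.$$

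The denominator is handled as follows. Expanding the square via Fubini gives $\int M_\theta(t)^2\,dt = \iint \max(0,\, 2r_n - |\pi_\theta(x-y)|)\,d\mu(x)d\mu(y)$, which is just the length of the overlap of two projected balls. A direct trigonometric computation then yields $\int_{S^1} \max(0,\, 2r_n - |\pi_\theta(x-y)|)\,d\theta \lesssim r_n \cdot \min(1,\, r_n/|x-y|)$, reflecting the geometric fact that two fixed points project to within $r_n$ of one another only for a set of directions of measure $\lesssim r_n/|x-y|$. The remaining double integral $\iint \min(1,\, r_n/|x-y|)\,d\mu(x)d\mu(y)$ is a truncated $1$-energy; splitting the domain into $\{|x-y| < r_n\}$ and dyadic annuli and applying Frostman's bound on each piece produces $\lesssim r_n^d$ when $d \le 1$. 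Substituting back gives $\int_A |\pi_\theta(E(r_n))|\,d\theta \gtrsim |A|^2 r_n^{1-d}$, which delivers the promised contradiction.

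The main obstacle is the angular estimate controlling how often two fixed points project near each other, combined with the dyadic Frostman bookkeeping for the truncated energy. These are the geometric inputs that replace the Fourier/energy identity $\int \|\pi_\theta \mu\|_{L^2}^2\,d\theta \sim I_1(\mu)$ used in Mattila's original proof; the argument is essentially the same, but the relevant quantities are built directly from projected strips rather than extracted from Plancherel.
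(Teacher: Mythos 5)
Your argument is correct, but it is essentially the proof the paper set out to replace: a geometric rendering of Mattila's energy method, with the $L^2$ norm of the projected measure computed from strip overlaps and the angular estimate $\int_{S^1}\max(0,\,2r_n-|\pi_\theta(x-y)|)\,d\theta\lesssim r_n\min(1,\,r_n/|x-y|)$ standing in for Plancherel. The paper's proof never introduces a measure. It uses the hypothesis to locate a single angle $\varphi$ in the intersection of the good sets $A_n=\{\theta: \dimh(\pi_\theta E)=\dimh E \text{ and } |\pi_\theta E(r_n)|\le r_n^{s-\epsilon}\}$, views the components of $\pi_\varphi E(r_n)$ as a cover of $\pi_\varphi E$ by at most $r_n^{s-\epsilon-1}$ intervals, and applies a reverse H\"older inequality to bound $\sum_k|I_{n,k}|^p$ for $p\ge 1-s+\epsilon$, concluding $\mathcal{H}^{1-s+\epsilon}(\pi_\varphi E)<\infty$ and then letting $\epsilon\to 0$. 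The trade-offs are worth making explicit. Your version proves the stronger quantitative lower bound $\int_A|\pi_\theta(E(r))|\,d\theta\gtrsim |A|^2 r^{1-d}$ at every scale (the ``level of measure'' statement the paper attributes to Mattila), and it does not need the Marstrand projection theorem as an input, whereas the paper's argument invokes Marstrand twice. On the other hand, your first step hinges on Frostman's lemma, which requires $E$ to be compact, Borel, or at least analytic for a measure with $\mu(B(x,\rho))\le\rho^d$ at $d$ near $\dimh E$ to exist; the theorem assumes only measurability, and the paper explicitly advertises that its cover-based argument avoids this existence question. To cover the stated hypotheses you should either add the reduction to compact subsets of nearly full dimension (available for analytic $E$ via the Besicovitch--Davies theorem) or record the restriction. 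The individual estimates all check out: $\int M_\theta\,dt=2r_n$, the support containment, both applications of Cauchy--Schwarz, the identity for $\int M_\theta^2\,dt$, and the dyadic summation of the truncated energy, which requires $d<1$ strictly and is guaranteed by your choice of $d$.
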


The contrapositive of this theorem appeared in \cite{Mat90} at the level of measure; here, it is only at the level of dimension. Mattila's argument relies on studying the energy of a measure; here, we use a direct geometric argument. The previous proof relies on being able to find a measure supported on the set that satisfies certain decay conditions, which is guaranteed for compact sets by Frostman's lemma. Our technique has the advantage of avoiding questions of the existence of such a measure, so we do not need any additional topological assumptions about the set.

\begin{proof}
We proceed in three steps. First, we need to find a particular direction where the projection $\pi_{\theta} E$ has full dimension while simultaneously having almost sufficiently quick decay of $|\pi_{\theta} E(r_n)|$. Secondly, we will use a H\"older inequality to control the sum of lengths over a natural cover on the projection side; this gives control on the Hausdorff measure of the projection. Finally, we tighten the bounds by adjusting exactly how quickly $|\pi_{\theta} E(r_n)|$ decays. Note that we do not need to differentiate between the sets $\pi_{\theta} (E(r))$ and $(\pi_{\theta} E)(r)$ (that is, the neighborhood of a projection within a line); they are equal.

First, note that $E$ has Hausdorff dimension at most $1$; otherwise, a result of Marstrand in \cite{Mar54}, Chapter II, would imply that $\fav(E) > 0$, contradicting that $\fav(E) \le \fav(E(r_n)) \le r_n^{1 - s} \to 0$. (Of course, this follows from Mattila's work in \cite{Mat90} or Chapter 9 of \cite{Mat95}; however, we are trying to avoid the use of potentials). For each $n$, we can consider a set of angles
$$A_n = \left\{\theta \in A : \dimh(\pi_{\theta} E) = \dimh E \text{ and } |\pi_{\theta} E(r_n)| \le r_n^{s - \epsilon}\right\}.$$
The first condition holds for almost all $\theta$; this also follows from Chapter II of \cite{Mar54}. Secondly, since $\int_A |\pi_{\theta} E(r_n)| \le C r_n^s$, we can estimate the size of the exceptional set $A_n^c$ by
$$|A \cap A_n^c| \le C r_n^{\epsilon}.$$
Passing to a subsequence of scales (which we also denote as $r_n$) if necessary, we can assume that $\sum_n |A \setminus A_n| < |A|$; thus, there exists an angle $\varphi \in \bigcap_n A_n$. In particular, $\pi_{\varphi} E$ has Hausdorff dimension equal to that of $E$ itself.

Next, we will control the Hausdorff measure of $\pi_{\varphi} E$ at dimensions a little above $s$. Note that $\pi_{\varphi} E(r_n)$ consists of a union of disjoint intervals $I_{n, k}$, each having length at least $2 r_n$. This forms a natural cover of $\pi_{\varphi}(E)$. We can estimate the number of intervals in the cover via
$$r_n^{s - \epsilon} \ge |\pi_{\varphi} E(r_n)| = \sum_k |I_{n, k}|$$
Using that $|I_{n, k}| \gtrsim r_n$, we can rearrange this to find that there are at most $r_n^{s - \epsilon - 1}$ such intervals. We are now in a position to estimate sums of the form $\sum_k |I_{n, k}|^p$ for $p \in (0, 1)$. A direct application of H\"older's inequality shows that if $p \in (0, 1)$, $q$ satisfies $1/p - 1/q = 1$, and $\mu$ is any measure,
$$\int fg \, d\mu \ge \left(\int f^p \, d\mu\right)^{1/p} \left(\int g^{-q} \, d\mu \right)^{-1/q}$$	
holds for measurable functions. Taking this with counting measure, we find that
\begin{align*}
r_n^{s - \epsilon} &\ge \sum_k |I_{n, k}| \\
&\ge \left(\sum_k |I_{n, k}|^p\right)^{1/p} \left(\sum_k 1^{-q}\right)^{-1/q} \\
&\gtrsim \left(\sum_k |I_{n, k}|^p\right)^{1/p} \left(r_n^{s - \epsilon - 1}\right)^{-1/q}
\end{align*}
Rearranging this leads to
$$\left(\sum_k |I_{n, k}|^p\right)^{1/p} \lesssim r_n^{s - \epsilon + \frac 1 q(s - \epsilon - 1)}.$$
The exponent can be simplified as
$$\left(1 + \frac 1 q\right)(s - \epsilon) - \frac 1 q = \frac{1}{p} \left(s - \epsilon - (1 - p)\right).$$
As long as $s - \epsilon - (1 - p) \ge 0$, we can give a uniform upper bound on $\sum_k |I_{n, k}|^p$; this works provided that $p \ge 1 - s + \epsilon$. Furthermore, one can see that each $I_{n, k}$ has radius no larger than $r_n^{s - \epsilon}$, which tends to zero as $n$ grows. Combining these observations leads to
$$\mathcal{H}^{1 - s + \epsilon}\left(\pi_{\varphi} E\right) < \infty.$$
so that $\dimh \pi_{\varphi}(E) \le 1 - s + \epsilon.$

Finally, take a smaller $\epsilon$ and rerun the argument with a (potentially) new choice of $\varphi$. Taking a sequence $\epsilon_m \to 0$, we then get a sequence $\varphi_m$ of angles and
$$\dimh E = \dimh \pi_{\varphi}(E) \le 1 - s + \epsilon_m \to 1 - s.$$
This is the desired bound on dimension.
\end{proof}

\section{Self-similar Sets}
In this section, we will show how self-similarity can be used to give lower bounds on the Favard length, as well as control the behavior of the sequence of projection lengths. Our result is

\begin{theorem}
Suppose that $\{A_n\}_{n \in \mathbb{N}}$ is a sequence of sets such that $A_{n + 1} \subseteq A_{n}$ for all $n$, that each generation can be written as a union
$$A_{n + 1} = \bigcup_{i = 1}^N r_{i} A_n + \beta_{i}$$
for some fixed set of contraction ratios $r_i > 0$, and that $\sum_{i} r_{i} = 1$. Then for each $\theta$, the sequence $\{|\pi_{\theta} A_n|\}_{n \in \mathbb{N}}$ is convex.
\end{theorem}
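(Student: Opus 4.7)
The plan is to establish the discrete convexity $|\pi_\theta A_{n-1}| + |\pi_\theta A_{n+1}| \ge 2|\pi_\theta A_n|$ by showing that the successive decrements $|\pi_\theta A_{n-1}| - |\pi_\theta A_n|$ are monotone nonincreasing in $n$. Writing $B_n := \pi_\theta A_n$, $c_i := \pi_\theta \beta_i$, and $U_i(X) := r_i X + c_i$ for the induced affine contraction of the projection line, the self-similar recursion projects cleanly to
$$B_{n+1} = \bigcup_{i=1}^N U_i(B_n), \qquad B_{n+1} \subseteq B_n,$$
where the containment is inherited from $A_{n+1} \subseteq A_n$. Since $B_{n+1} \subseteq B_n$, we have $|B_n| - |B_{n+1}| = |B_n \setminus B_{n+1}|$, so the task reduces to bounding this generational set difference in terms of $|B_{n-1} \setminus B_n|$.

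The key computation uses the identity $\bigl(\bigcup_i X_i\bigr) \setminus Y = \bigcup_i (X_i \setminus Y)$ to write
$$B_n \setminus B_{n+1} = \bigcup_i \Bigl[ U_i(B_{n-1}) \setminus \bigcup_j U_j(B_n) \Bigr] \subseteq \bigcup_i \bigl[ U_i(B_{n-1}) \setminus U_i(B_n) \bigr] = \bigcup_i U_i(B_{n-1} \setminus B_n),$$
where the inclusion simply drops the subtractions by $U_j(B_n)$ with $j \ne i$, which can only enlarge each piece, and the final equality uses that each $U_i$ is an injective affine map. Each $U_i$ scales one-dimensional Lebesgue measure by $r_i$, so subadditivity together with $\sum_i r_i = 1$ yields
$$|B_n \setminus B_{n+1}| \le \sum_i r_i\, |B_{n-1} \setminus B_n| = |B_{n-1} \setminus B_n|,$$
which is exactly the desired inequality $|\pi_\theta A_n| - |\pi_\theta A_{n+1}| \le |\pi_\theta A_{n-1}| - |\pi_\theta A_n|$.

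The hard part is handling the overlaps between the $N$ pieces $U_i(B_n)$: a direct inclusion--exclusion on the union would involve uncontrollable cross terms, and the naive bound $|B_{n+1}| \le \sum_i r_i |B_n|$ only recovers monotonicity (losing precisely the overlap information). The decisive observation is that when one measures the difference of two \emph{nested} unions, one is free to discard subtractions by all non-matching pieces, since this only enlarges the set; this maneuver decouples a global overlap problem into $N$ independent single-map scale estimates. The hypothesis $\sum_i r_i = 1$ is then the exact balance that lets these subadditive contributions sum back up to the previous generation's decrement, producing convexity with no separation or open set condition required.
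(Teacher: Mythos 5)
Your proposal is correct and follows essentially the same route as the paper's own proof: the same projection of the self-similar recursion to the line, the same key step of replacing $U_i(B_{n-1}) \setminus \bigcup_j U_j(B_n)$ by $U_i(B_{n-1}) \setminus U_i(B_n)$, and the same use of subadditivity with $\sum_i r_i = 1$ to conclude that the decrements are nonincreasing.
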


\begin{proof} Let us define $E_{n, \theta} = \pi_{\theta} A_n$; note that on the projection side, $E_{n, \theta}$ is also self-similar and is generated by similitudes of the form $T_i : x \mapsto r_i x + \pi_{\theta} \beta_i$, where $x$ is measured within the line $L_{\theta}$. We then have
\begin{align*}
\alpha_n(\theta) - \alpha_{n + 1}(\theta) &= |E_{n, \theta}| - |E_{n + 1, \theta}| \\
&= |E_{n, \theta} \setminus E_{n + 1, \theta}| \\
&= \left|\bigcup_{i = 1}^N T_i(E_{n - 1, \theta}) \setminus \bigcup_{j = 1}^N T_j(E_{n, \theta})\right| \\
&\le \left|\bigcup_{i = 1}^N \big(T_i(E_{n - 1, \theta}) \setminus T_i(E_{n, \theta})\big) \right| \\
&= \left|\bigcup_{i = 1}^N T_i(E_{n - 1, \theta} \setminus E_{n, \theta})\right| \\
&\le \sum_{i = 1}^N r_i |E_{n - 1, \theta} \setminus E_{n, \theta}| \\
&= \alpha_{n - 1}(\theta) - \alpha_n(\theta).
\end{align*}
where we have used that $\{E_n\}_{n \in \mathbb{N}}$ is a decreasing sequence of sets, that each $T_i$ is a contraction by $r_i$ along with a translation, and that $\sum_i r_i = 1$. Rearranging this, we find that $$\alpha_n(\theta) \le \frac{\alpha_{n - 1}(\theta) + \alpha_{n + 1}(\theta)}{2}$$
which is the desired result.
\end{proof}

As a corollary, we can easily deduce lower bounds on the Favard length of the four-corner Cantor set. Recall that the generations of this set are constructed by taking $\mathcal{K}_0 = [0, 1]^2$; then $\mathcal{K}_n$ is constructed by taking each square in $\mathcal{K}_{n - 1}$, dividing it into four sections horizontally and vertically, and taking the four subsquares at the corners.  The result is a family of $4^n$ squares of sidelength $4^{-n}$ each. Alternatively, the set is generated by the similitudes $f_i(z) = \frac 1 4 z + \beta_i$, with $\{\beta_i : 1 \le i \le 4\} = \{(0, 0), (0, 3/4), (3/4, 0), (3/4, 3/4)\}$. We have the following result:

\begin{corl}
The Favard lengths of the generations of the four-corner Cantor set satisfy $\fav(\kn) \gtrsim \frac 1 n$.
\end{corl}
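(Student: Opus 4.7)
The strategy is to apply Theorem 3.1 to the four-corner Cantor set and then read off the $1/n$ bound from the resulting convex sequence of Favard lengths. First I would verify the hypotheses of Theorem 3.1: the generating similitudes $f_i(z) = z/4 + \beta_i$ have common contraction ratio $r_i = 1/4$ for $i = 1, \ldots, 4$, so $\sum_i r_i = 1$. Hence for each direction $\theta$ the sequence $\alpha_n(\theta) := |\pi_\theta \kn|$ is convex in $n$; integrating against $d\theta$ and applying Fubini, the Favard lengths $F_n := \fav(\kn)$ themselves form a convex sequence.

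Next I would pin down the two endpoints of this sequence. On one side, $F_0 = \fav([0,1]^2) = 8$ by Cauchy's perimeter formula (equivalently, $\int_0^{2\pi} (|\cos\theta| + |\sin\theta|)\,d\theta$). On the other, the attractor $\mathcal{K} := \bigcap_n \kn$ is purely $1$-unrectifiable with $0 < \mathcal{H}^1(\mathcal{K}) < \infty$, so Besicovitch's theorem gives $\fav(\mathcal{K}) = 0$; since $\kn \searrow \mathcal{K}$ with uniformly bounded projections, monotone convergence then gives $F_n \searrow 0$.

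Finally I would extract $F_n \gtrsim 1/n$ from these ingredients. The decrements $D_n := F_n - F_{n+1}$ are nonneg and nonincreasing in $n$ (by convexity of $F_n$), and $\sum_n D_n = F_0 = 8$; monotonicity of $D$ already forces $D_n \le 8/(n+1)$. Writing the tail as $F_n = \sum_{k \ge n} D_k \ge n D_{2n-1}$ and then applying convexity dyadically (using the chord inequality $F_n \le (F_0 + F_{2n})/2$ together with the explicit starting value $F_0 = 8$) should propagate a matching lower bound on $D_{2n-1}$ and so give $F_n \gtrsim 1/n$. The hardest step will be this last quantitative one: convexity of $F_n$ together with $F_0 = 8$ and $F_\infty = 0$ does not on its own force polynomial decay — the geometric sequence $F_n = 8 \cdot 2^{-n}$ satisfies all three properties yet decays exponentially — so obtaining the sharp $1/n$ rate must genuinely use the rigid self-similar identity $\sum_i r_i = 1$ at each scale (which is exactly what produced convexity in the proof of Theorem 3.1), perhaps via a direct evaluation of $F_N$ for some small generation $N$ followed by convex propagation.
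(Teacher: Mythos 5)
There is a genuine gap, and you have in fact located it yourself: convexity of the integrated sequence $F_n = \fav(\kn)$ together with $F_0 = 8$ and $F_n \to 0$ cannot force the bound $F_n \gtrsim 1/n$ (your own example $8\cdot 2^{-n}$ shows this), and none of the repair strategies you sketch can close it. Propagating convexity forward from generation $N$ gives only $F_n \ge F_N - (n-N)D_N$, which is vacuous for large $n$ unless the single decrement $D_N$ is already known to be small of order $1/n$; and your tail estimate $F_n \ge n D_{2n-1}$ would require a \emph{lower} bound $D_{2n-1}\gtrsim 1/n^2$ on the decrements, which is not a consequence of convexity and is essentially as hard as the statement being proved. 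The structural mistake is integrating over $\theta$ too early: once you pass to $F_n$ you have discarded the only data that makes the argument work.

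The paper's proof keeps $\theta$ fixed and exploits a special direction. At $\theta^* = \arctan(1/2)$ the four pieces of $\mathcal{K}_1$ project onto intervals that tile $\pi_{\theta^*}\mathcal{K}_0$, so the first decrement vanishes: $\alpha_0(\theta^*) - \alpha_1(\theta^*) = 0$. The function $\theta \mapsto \alpha_0(\theta)-\alpha_1(\theta)$ is piecewise $C^1$ with derivative bounded by $10$, so on an interval $I_n$ of length $\frac{1}{20n}$ about $\theta^*$ one has $\alpha_0(\theta)-\alpha_1(\theta) \le \frac{1}{2n}$. Pointwise convexity in $n$ (Theorem 3.1) says the decrements $\alpha_k(\theta)-\alpha_{k+1}(\theta)$ are nonincreasing in $k$, hence $\alpha_0(\theta)-\alpha_n(\theta) \le n\bigl(\alpha_0(\theta)-\alpha_1(\theta)\bigr) \le \frac 1 2$ and $\alpha_n(\theta) \ge \frac 1 2$ for every $\theta \in I_n$. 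Only now does one integrate, over $I_n$ alone, to get $\fav(\kn) \ge \frac{1}{40n}$. So the missing idea in your proposal is precisely this special angle where the generation-one loss is zero, combined with Lipschitz continuity in $\theta$ to spread that smallness over an interval of length comparable to $1/n$; I would recommend restructuring the argument around that pointwise statement rather than around the sequence $F_n$.
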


\begin{proof}
Fix $n \in \mathbb{N}$. Note that if we take $\theta^* = \arctan 1 / 2$, the projection $\pi_{\theta^*}$ maps the four components of $\mathcal{K}_1$ to four intervals that only overlap on the boundaries (and therefore, $\pi_{\theta} \mathcal{K}_n$ is the same interval for all $n$, as an application of self-similarity). Therefore, $\alpha_0(\theta^*) - \alpha_1(\theta^*) = 0$. Furthermore, $\theta \mapsto \alpha_0(\theta) - \alpha_1(\theta)$ is piecewise $C^1$ and the derivative is bounded by $10$ (which follows from a direct computation of the function $\alpha_0 - \alpha_1$). Hence, there is an interval $I_n$ of length $\frac{1}{20n}$ centered at $\theta^*$ such that 
$$0 \le \alpha_0(\theta) - \alpha_1(\theta) \le \frac{1}{2n}$$
for all $\theta$ in the interval. Applying convexity iteratively leads to $\alpha_n(\theta) \ge \frac 1 2$ for all $\theta \in I_n$, and so
$$\fav(\kn) \ge \int_{I_n} \alpha_n(\theta) \, d\theta \ge \frac 1 {40n}$$
as desired.
\end{proof}

Note that the key idea here is that there is a special angle at which the projection acts (more or less) bijectively on components. It follows that this technique is applicable to a broad class of self-similar sets with such an angle - the Sierpinski gasket is another important example. One hopes that tightening the losses of this technique would be sufficient to improve the estimate past $1/n$; it was proved in \cite{BatVol08} that the Favard length of $\kn$ is actually at least $c \ln n / n$.

It is worth mentioning that the function $n \mapsto |E(4^{-n})|$ is not generally convex without the self-similarity assumption. For example, the set $$\{0, 1/4, 1/2, 3/4, ..., 100\}$$ (or a small neighborhood of it) serves as a counterexample. A modification of this example (using ever finer lattices around carefully selected points in the set), one can find examples where $n \mapsto |E(4^{-n})|$ is neither eventually convex nor eventually concave. Rather, the sequence exhibits ``see-saw'' behavior as it decays to zero.

\bibliography{cite}{}
\bibliographystyle{plain}
\end{document}